\newtheorem{theorem}{Theorem}[section]
\newtheorem{proposition}[theorem]{Proposition}
\newtheorem{definition}[theorem]{Definition}
\newtheorem{remark}[theorem]{Remark}
\newtheorem{example}[theorem]{Example}
\def\mf{\mathfrak}
\def\F{\mathbb{F}}
\newcommand{\weakstar}{{\,\,\raise5pt\hbox{$\underrightarrow{\mathit{{}_{weak^*}}}$}\,\,}}
\newcommand{\erre}{\mbox{$\mathbb{R}$}}
\newcommand{\Fs}{\mathcal{F}}
\newcommand{\As}{\mathcal{A}}
\newcommand{\Eb}{\mathbb{E}}
\providecommand{\keywords}[1]{\textbf{{Key Words:}} #1}
\providecommand{\msc}[1]{\textbf{{M.S.C.:}} #1}
\title{ }
\begin{document}
\date{}
\title{\bf Set-valued Brownian motion}
\author{Domenico Candeloro,  Coenraad C.A. Labuschagne,    Valeria Marraffa, Anna Rita Sambucini}
\newcommand{\Addresses}{{
  \bigskip
 D.~Candeloro, \textsc{Department of Mathematics and Computer Sciences, University of Perugia,
 1, Via Vanvitelli -- 06123, Perugia (Italy)}\par\nopagebreak
 \textit{E-mail address}, D.~Candeloro: \texttt{domenico.candeloro@unipg.it}

  \medskip
Coenraad C.A. ~ Labuschagne, \textsc{
Programme in Quantitative Finance,
Department of Finance and Investment Management.
         University of Johannesburg
         P O Box 524, Auckland Park 2006 (South Africa)}\par\nopagebreak
 \textit{E-mail address}, C.C.A. ~ Labuschagne
\texttt{Coenraad.Labuschagne@gmail.com}\\

\medskip
Valeria Marraffa,\textsc{
Department of Mathematics and Computer Sciences,
University of Palermo,
 Via Archirafi 34, 90123 Palermo (Italy)}\par\nopagebreak
\textit{E-mail address}, V. ~ Marraffa \texttt{ valeria.marraffa@unipa.it}\\

\medskip
A.R. ~Sambucini, \textsc{Department of Mathematics and Computer Sciences, University of Perugia,
 1, Via Vanvitelli -- 06123, Perugia (Italy)}\par\nopagebreak
\textit{E-mail address}, A.R.~Sambucini: \texttt{anna.sambucini@unipg.it}
}}
\maketitle

\begin{abstract}
Brownian motions, martingales, and Wiener processes are introduced and studied for set valued functions taking  values in the subfamily of compact convex subsets of arbitrary Banach space $X$. 
The present paper is an application of the paper \cite{L1} in which an embedding result is obtained which considers also the ordered structure of $ck(X)$ and of \cite{GL1,GL2} in which these processes are considered in f-algebras.  
\end{abstract}

\noindent
\msc{60J65, 58C06, 46A40}
\\
\noindent
\keywords{Brownian motion, R{\aa}dstr\"{om} embedding theorem, Vector Lattices, Marginal distributions, generalized Hukuhara difference. }

\section{Introduction}

It is well known that the concept of Brownian motion is one of the most important in probability theory and its applications.
 The starting point of the present research are the papers \cite{bck,Grobler,GroblerCor, GL1,GL2,LW20101,sg2007,s1994,Shreve} in which stochastic integration is studied in partially ordered spaces or in the fuzzy set valued case. The literature in this field is  rich, we can cite for example \cite{s1990,s1990a,vw2014,so,GLM,Grobler2,dallas,bms,cs2014,BCS,cs2015}.

Here the notion of set valued Brownian motion is introduced and studied for the case of compact convex subsets of a Banach space $X$.
The paper is organized as follows: in section 2 the basic properties of the hyperspace $ck(X)$ and its embedding in $C(K)$ are introduced.
Since, in order to properly define a set valued Browian motion, a difference and a multiplicative structure are needed, the embedding and the Riesz structure of $C(K)$ are used.
For this reason the theory of integration in vector lattice is very important and useful, see for example \cite{jafa,s1994,vw2012,vw2012e}.\\
In section 3 examples of $ck(X)$-valued Browian motion are given together with some properties and with some characterizations, similar to the usual ones, involving martingales and so on.
In section 4 a possible extension to arbitrary Banach lattices is given: this is  done in the more abstract framework of \cite{GL1,GL2}, with the purpose to compare the two types of construction in the particular case here discussed, where the Banach lattice is $C(K)$.

In the appendix a characterization of the generalized Hukuhara difference which extends \cite{ls2010} is introduced.

\section{Probability distributions}
We recall  from \cite[Chapter II]{cv1977} the following notations that will be used in the present paper.
Let $X$ be a Banach space
with its dual $X^*$
 and 
let  $ck(X)$ be the subfamily of $2^X \setminus \emptyset$ of all
 compact, convex subsets of $X$.\\
As in \cite{cv1977} for all $A,B \in ck(X)$ and $\lambda \in \mathbb{R}$  the Minkowski addition and scalar multiplication are defined as
\begin{eqnarray}\label{operations}
A + B = \{ a+b: a \in A, b \in B \}, \,\,\, \mbox{and} \,\,\,\, 
\lambda A =  \{ \lambda a : a \in A \}
\end{eqnarray}

\noindent Let $H$ be the corresponding Hausdorff metric on $ck(X)$, i.e.
$$
H(A,B)=\max(e_d(A,B),e_d(B,A))
$$
where the excess $e_d(A,B)$ of the set $A$ over the set $B$ is
defined as
$$
e_d(A,B)=\sup\{ d(a,B):a\in A \}=\sup   \{  \inf_{b\in B}d(a,b):
a\in A\}.
$$

\noindent It is known that the family  $ck(X)$ endowed with the Hausdorff metric is a complete metric space. For every $C \in  ck(X)$, the
{\it support function of} $C$ is denoted by $s( \cdot, C)$ and is
defined by $s(x^*, C) = \sup \{ \langle x^*,c \rangle : \ c \in C\}$
for each $x^* \in X^*$. \\
Clearly, the map $x^* \longmapsto s(x^*, C)$
is sublinear on $X^*$ and 
$$-s(-x^*, C) = \inf \{ \langle x^*,c \rangle : \ c \in C\}, \mbox{\,\, for each \,\,} x^* \in X^*.$$

The following theorem holds:
\begin{theorem}\label{th5.7diL1}{\rm (\cite[Theorem 5.7]{L1})}
Let $X$ be a Banach space; then there exist a compact Hausdorff
 space $K$ and a map $j: ck(X) \rightarrow C(K)$ such that
\begin{enumerate}[\rm \bf  (\ref{th5.7diL1}.a)]
\item $j(\alpha A + \beta C) = \alpha j(A) + \beta j(C)$ for all $A,C \in ck(X)$ and $\alpha, \beta \in \mathbb{R}^+$,
\item $d_H(A,C) = \| j(A) - j(C) \|_{\infty}$ for every $A,C \in ck(X)$,
\item $j(ck(X))$ is norm closed in $C(K)$,
\item $j(\mbox{co}(A \cup B) = \max \{j(A), j(C) \}$, for all $A,C \in ck(X)$.
\end{enumerate}
\end{theorem}

\noindent The R\aa dstr\"om embedding  $\widetilde{j(ck(X))}$ of $ck(X)$ is given by $j:  ck(X) \to\widetilde{j(ck(X))}$, where
 $ j(C)= s(\cdot, C) \mbox { for all } C\in ck(X) $
 and  $\widetilde {j( ck(X)) }$ is the closure of the span of $\{s(\cdot, C) : C\in   ck(X)\}$
 in $(C(B_{X^*}), \sigma(X^*,X))$.
 Here $C(B_{X^*}) =\{ f:B_{X^*} \to \mathbb R: f \mbox { is continuous} \}$,
 $B_{X^*}$ denotes the unit ball of $X^*$  and $\sigma(X^*,X)$ denotes the
 weak$^*$ topology on $X^*$.
The bounded-weak-star (bw*) topology is the strongest topology of  $B_{X^*}$ with coincides with the weak$^*$ topology
of $B_{X^*}$ on every ball $B^r_{X^*}:= \{ f \in B_{X^*} : \|f\| \leq r\}$.\\
Let  ($\frak{B}( ck(X))$ ) denote the Borel $\sigma$-algebra on  $(ck(X), d_H)$.
\\

In order to define Brownian multivalued  motion a multiplication and a difference in $ck(X)$ are needed.
For what concerns the difference see the Appendix 
(however we shall always consider the difference $B_1-B_2$ of two convex and compact sets as the element $j(B_1)-j(B_2)$ in $C(K)$), 
while
to access the averaging properties of conditional expectation operators a
multiplicative structure is needed. In the Riesz space setting the most natural
multiplicative structure is that of an f-algebra. This gives a multiplicative
structure that is compatible with the order and additive structures on the
space. The ideal, $E^e$, of $E$ generated by $e$, where $e$ is a weak order unit of
$E$ and $E$ is Dedekind complete, has a natural f-algebra structure. This is
constructed by setting $(Pe) \cdot (Qe) = PQe = (Qe) \cdot (Pe)$ for band projections
$P$ and $Q$, and extending to $E^e$ by use of Freudenthal's Theorem. In fact this
process extends the multiplicative structure to the universal completion $E^u$,
of $E$. This multiplication is associative, distributive and is positive in the
sense that if $x, y \in E^+$ then $xy > 0$. Here $e$ is the multiplicative unit.\\

Thus the multiplication operation $\cdot : ck(X) \times ck(X) \rightarrow C(K)$ can be defined by:
$$A \cdot B =j(A) \cdot j(B).$$
If $X$ is finite dimensional then 
 $j(B_X) \cdot j(B)= j(B)$, and $B_X \cdot B$ exists not only in $C(K)$ but also in $ck(X)$ (and of course coincides with $B$).

\section{$ck(X)$-valued  Brownian motion}

Now we shall introduce a Brownian motion taking values in the space $ck_r(X)$, where $X$ is any general Banach space. 
(Here the notation $ck_r(X)$ means all the indicator functions of the type $r1_B$, as $r$ varies in $\erre$ and $B$ in $ck(X)$).

In order to do this, let us denote by $e$ the unit function in $C(K)$. In case $X$ is finite-dimensional, $e=j(B_X)$, the corresponding element of the unit ball of $X$.

\begin{definition}\label{bm-ab}\rm
Let $S$ denote the hyperspace we are interested in, i.e. 
$ck_r(X)$, and
let $(B_t)_t$ be a process taking values in 
$S$,  namely for every $t \geq 0$  $B_t : \Omega \rightarrow  
S \subset C(K)$.
This process will be called {\em set-valued Brownian motion} if 
the following conditions are satisfied:
\begin{description}
\item[\ref{bm-ab}.1] There exists an $f$-algebra $L$ such that
$B_t(\omega)\in L$ for each $\omega\in \Omega$ and each $t>0$;
\item [\ref{bm-ab}.2] $B_t, B_t^2$ are $C(K)$-valued Bochner integrable functions for each $t > 0$;  
\item [\ref{bm-ab}.3] For every evaluation functional $f \in C(K)^*$, the process $f(B_t)_t$ is a standard real Brownian motion.
\end{description}
\noindent We recall that an {\em evaluation functional} $f$ associates to every $x\in C(K)$ the value $x(k)$ for some fixed $k\in K$.

\end{definition}
\begin{example}\label{we} \rm
The following is an example of a set-valued Brownian motion, when $X$ is finite-dimensional:
\( (B_t)_t = (W_t e )_t \)
where $(W_t)_t$ is the standard scalar Brownian motion, and $e$ is the unit ball in $X$.
Then for every $f \in C(K)^* $ such that $f(e)= 1$ it is
\[f (W_t e )_t = (W_t f(e))_t = (W_t)_t.\]
So for every elementary event $\omega$ such that $W_t(\omega)
 > 0$ the set $W_t e = W_t(\omega) j(B_X) \in j(S)$, while if $W_t(\omega) < 0$ the meaning of $B_t(\omega)$ is $-|W_t(\omega)|j(B_X)\in -j(S)$.



Next, for every  real number $t$ and every element $B\in ck(X)$, the notation $tB$ represents the indicator function $t1_B$.

Finally, if $(W_t)_{t>0}$ denotes the standard Brownian motion, 
and if we set $V_t:=W_te$ for each positive $t$, then we have
 shown that $(V_t)_{t>0}$ is a Brownian motion taking values in $S:=ck_r(X)$ (or in $j(S)$ after embedding).
\end{example} 

From now on, let $(\Omega,\As,P)$ denote any fixed probability space, with a $\sigma$-algebra $\As$ and a countably additive probability measure $P$. 
\begin{definition}\rm
Let $\Gamma: \Omega \to  ck(X)$ be a measurable function. Define
$$P_{\Gamma}(B)=P(\Gamma(\omega)\subset B) 
\mbox { for all } B\in \frak{B}( ck(X)) 
$$
and 
$$F_{\Gamma}(Y)=P(\Gamma(\omega) \subset Y)
 \mbox{ for all } Y\in ck(X).  $$
\end{definition}
Then $P_{\Gamma}: \frak{B}(ck(X)) \to [0,1]$ is a probability measure (the {\em probability distribution of $\Gamma$}), and $F_{\Gamma}: {\rm ck}(X) \to [0,1]$ is its  \emph{distribution function}.


\begin{proposition} 
Let $\Gamma: \Omega \to {\rm ck}(X)$ be a measurable set-function.
Then $F_{\Gamma}= F_{j\circ\Gamma} (j(\cdot))$. 
\end{proposition}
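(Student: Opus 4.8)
The plan is to read the inclusion symbol on the $C(K)$ side as the pointwise (Riesz) order $\leq$ inherited from $C(K)$, so that the distribution function of the $C(K)$-valued map $j\circ\Gamma$ is $F_{j\circ\Gamma}(Z) = P\big((j\circ\Gamma)(\omega)\leq Z\big)$, and then to reduce the whole identity to the classical fact that set inclusion of compact convex sets is detected by their support functions. Since the R\aa dstr\"om embedding is exactly $j(C) = s(\cdot, C)$, everything will follow once I establish, for each fixed $Y \in ck(X)$, the event-wise equivalence
\[
\Gamma(\omega) \subseteq Y \iff j(\Gamma(\omega)) \leq j(Y), \qquad \omega \in \Omega .
\]

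First I would record the key order fact: for $A, C \in ck(X)$ one has $A \subseteq C$ if and only if $s(x^*, A) \leq s(x^*, C)$ for every $x^* \in X^*$. The forward implication is immediate, since taking the supremum of $\langle x^*, \cdot\rangle$ over the smaller set $A$ cannot exceed the supremum over $C$. For the converse I would invoke Hahn--Banach: if some $a \in A \setminus C$, then, $C$ being closed and convex, a functional $x^* \in X^*$ strictly separates $a$ from $C$, so that $\langle x^*, a\rangle > s(x^*, C)$ and hence $s(x^*, A) > s(x^*, C)$, a contradiction. Translating through $j(C) = s(\cdot, C)$, this says precisely that $A \subseteq C \iff j(A) \leq j(C)$ in the pointwise order of $C(K)$; that is, $j$ is an order isomorphism onto its range, which is the substance of the claimed equivalence.

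With the equivalence in hand I would simply chain the definitions. For a fixed $Y \in ck(X)$,
\[
F_{\Gamma}(Y) = P\big(\Gamma(\omega) \subseteq Y\big) = P\big( j(\Gamma(\omega)) \leq j(Y)\big) = F_{j\circ\Gamma}\big(j(Y)\big),
\]
the middle equality being the event-wise equivalence just proved and the last being the definition of the distribution function of $j\circ\Gamma$ evaluated at $j(Y)$. Since $Y$ is arbitrary, this is the asserted identity $F_\Gamma = F_{j\circ\Gamma}(j(\cdot))$.

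The only point requiring care is measurability: the three probabilities above must refer to genuine events. Since $j$ is an isometry by (\ref{th5.7diL1}.b), it is in particular continuous, so $j\circ\Gamma$ is $\As$-measurable into $C(K)$; and because the two events $\{\Gamma \subseteq Y\}$ and $\{j\circ\Gamma \leq j(Y)\}$ literally coincide, no new measurability issue arises beyond the one already granted for $\Gamma$. I expect this bookkeeping, rather than the support-function argument itself, to be the only place where the Banach-space generality (as opposed to the finite-dimensional case) has to be watched.
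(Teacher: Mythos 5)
Your proof is correct and takes essentially the same route as the paper: the paper's proof is exactly the chain $F_{\Gamma}(Y)=P(\Gamma\subset Y)=P(j(\Gamma)\leq j(Y))=F_{j\circ\Gamma}(j(Y))$, with the key equivalence $\Gamma\subset Y \Longleftrightarrow j(\Gamma)\leq j(Y)$ simply asserted. Your support-function/Hahn--Banach separation argument merely supplies the justification for that equivalence, which the paper leaves implicit.
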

\begin{proof}
It is
\begin{eqnarray*}
F_{\Gamma} (Y) &=& P( \Gamma \subset Y) \,\,\,  \mbox{where} \,\,\, \Gamma \subset Y \Longleftrightarrow j(\Gamma) \leq j(Y) \\
F_{\Gamma} (Y) &=& P( \Gamma \subset Y) = P(j(\Gamma) \leq j(Y)) = F_{j(\Gamma)} (j(Y)).
\end{eqnarray*}
\end{proof}

\begin{example}
\rm Let us assume that $X_1$ and $X_2$ are two real-valued random variables, $X_1\leq X_2$, and consider the variable $\Gamma:=[X_1,X_2]$ taking values in the hyperspace $ck(\erre)$. 
Now, when $Y$ is an element of $ck(\erre)$, i.e. $Y=[y_1,y_2]$, the condition 
$\Gamma\subset Y$ means $[X_1\geq y_1,X_2\leq y_2]$, and so
$$F_{\Gamma}(Y)=P([y_1\leq X_1\leq X_2\leq y_2]).$$ 
On the other hand, in this situation, the {\em unit sphere} of the dual space of $\erre$ is simply the set $\{-1,1\}$, and, for every set $[a,b]\in ck(\erre)$, one has
$$s(x^*,[a,b])=\left\{\begin{array}{rr}
-a,& x^*=-1\\ & \\
\ b,& x^*=1.
\end{array}\right.$$
Hence, one can write $j([a,b])=(-a,b)$ as soon as $a,b\in \erre,\ a\leq b$. Then $j(\Gamma)=(-X_1,X_2)$ and $j(Y)=(-y_1,y_2)$: the condition $j(\Gamma)\leq j(Y)$ now means $-X_1\leq -y_1$ and $X_2\leq y_2$, and again one has
$$F_{j(\Gamma)}(j(Y)) = P([y_1\leq X_1\leq X_2\leq y_2]) = F_{\Gamma}(Y).$$

\noindent
Let $X_1, Z$ be two independent random variables with distribution $\Gamma(1,\lambda)$, and denote $X_2:=X+Z$.
 Then clearly $0\leq X_1\leq X_2$, and  $\underline{X}= [X_1,X_2]$ defines a $ck(\erre)$-valued variable.
\\
 In order to compute its distribution function, fix arbitrarily $y_1$ and $y_2$ in $\erre$, with $0\leq y_1\leq y_2$. Then 
\begin{eqnarray*}
F_{\underline{X}}([y_1,y_2]) &=& P([y_1\leq X_1\leq X_2\leq y_2])
=\int_{y_1}^{y_2}\left({\int_0^{y_2-x}}f_Z(z)dz\right) f_{X_1}(x)dx=\\
&=&\lambda^2\int_{y_1}^{y_2}{\int_0^{y_2-x}}e^{-\lambda x}e^{-\lambda z}dz dx.
\end{eqnarray*}
Simple computations give finally
\[F_{\underline{X}}([y_1,y_2])=e^{-\lambda y_1}-e^{-\lambda y_2}+\lambda(y_1-y_2)e^{-\lambda y_2}= 
F_{(-X_1,X_2)} (-y_1,y_2).\]
\\

\begin{figure} [h] 
\begin{center}
  \includegraphics [width=0.6\textwidth]{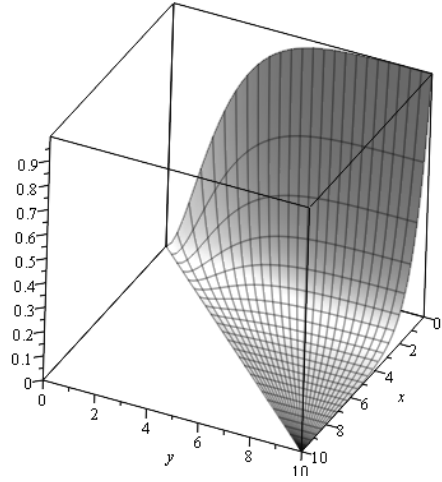} 
\caption{distribution function}
\end{center}
\end{figure}
\end{example}

\rm
In \cite{sg2007} the set valued Gaussian distribution is defined to satisfy the condition $F_{\Gamma}=F_{j\circ \Gamma}(j(\cdot))$.

 

\begin{theorem}\label{equiv}

Assume that 
$W_t: \Omega \to ck(X)$ is a weakly continuous $L$-valued function of $t\geq 0$
 that satisfies $W_0=0$. Moreover suppose that $W_t$ and $W^2_t$ are Bochner integrable for each $t$. Let $\{t_0, \ldots t_m\}$ be such that 
$0=t_0< t_1 < \dots <t_m$.

Then $(W_t)_t$ is a Brownian motion if and only if one of 
the following statements 
holds for any evaluation function $f\in C(K)^*$:
\begin{enumerate}[\rm \bf \ref{equiv}.i)]
\item The increments 
$f(W_{t_{1}}-W_{t_{0}}), f(W_{t_{2}}-W_{t_{1}}), \dots, f(W_{t_{m}}-W_{t_{m-1}})$ 
are independent and each of these increments is normally distributed with null mean and variance equal to $t_{i+1}-t_i$.
\item The random variables 
$f(W_{t_{1}}), f(W_{t_{2}}), \dots, f(W_{t_{m}})$
are jointly normally distributed with means equal to zero and co-variance matrix $V$  given by 
\begin{eqnarray*}
 V &=&
 \left( \begin{array}{cccc}
t_1 & t_1 & \cdots & t_1\\
t_1 & t_2 & \cdots & t_2\\
\vdots & \vdots & & \vdots \\
t_1 & t_m & \cdots & t_m
\end{array} \right);
\end{eqnarray*}

\item The random variables 
$f(W_{t_{1}}), f(W_{t_{2}}), \dots, f(W_{t_{m}})$
have the joint moment-generating function given by
\begin{eqnarray*}
\varphi (u_1, \ldots, u_m)&=& 
\exp \Big\{ \frac{1}{2} u^2_m (t_{m} - t_{m-1})\Big\}\cdots \exp 
\Big \{ (u_1 + u_2+ \ldots +u_{m})^2 t_1\Big\},
\end{eqnarray*}
for every $u_1, \ldots, u_m \in \mathbb{R}$.
\end{enumerate}
\end{theorem}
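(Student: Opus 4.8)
The plan is to reduce the set-valued statement to the classical scalar theory through the evaluation functionals, and then to invoke the three standard equivalent descriptions of a real Brownian motion. First I would observe that, under the standing hypotheses, the whole content of the theorem is carried by Definition \ref{bm-ab}.3. Indeed conditions \ref{bm-ab}.1 and \ref{bm-ab}.2 are assumed outright ($W_t$ is $L$-valued and $W_t$, $W_t^2$ are Bochner integrable), so $(W_t)_t$ is a set-valued Brownian motion \emph{if and only if} for every evaluation functional $f\in C(K)^*$ the scalar process $(f(W_t))_t$ is a standard real Brownian motion. Here I would record the elementary facts legitimizing the reduction: each $f$ is linear and bounded, so $W_0=0$ gives $f(W_0)=0$; weak continuity of $t\mapsto W_t$ forces $t\mapsto f(W_t)$ to be continuous; and $f(W_{t_i}-W_{t_{i-1}})=f(W_{t_i})-f(W_{t_{i-1}})$, the differences being read in $C(K)$ via $j$ as fixed in Section~2, so that the increments of $(f(W_t))_t$ are exactly the $f$-images of the increments of $(W_t)_t$. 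Since an evaluation functional is multiplicative, $f(W_t^2)=f(W_t)^2$, whence the Bochner integrability of $W_t^2$ yields finite second moments for $(f(W_t))_t$.

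Once this reduction is in place, the theorem becomes the assertion that, for a real continuous process $Y_t:=f(W_t)$ with $Y_0=0$, being a standard Brownian motion is equivalent to each of \ref{equiv}.i), \ref{equiv}.ii), \ref{equiv}.iii), uniformly over the arbitrary partition $0=t_0<\dots<t_m$. These are the classical characterizations of Brownian motion through its finite-dimensional distributions, so I would prove them in the cycle \ref{equiv}.i)$\,\Rightarrow\,$\ref{equiv}.ii)$\,\Rightarrow\,$\ref{equiv}.iii)$\,\Rightarrow\,$\ref{equiv}.i), the equivalence of \ref{equiv}.i) with ``$(Y_t)_t$ is a standard Brownian motion'' being, for a continuous process vanishing at $0$, just the definition. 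For \ref{equiv}.i)$\,\Rightarrow\,$\ref{equiv}.ii): if the increments are independent and $N(0,t_i-t_{i-1})$, then each $Y_{t_j}=\sum_{i\le j}(Y_{t_i}-Y_{t_{i-1}})$ is a linear image of an independent Gaussian family, so $(Y_{t_1},\dots,Y_{t_m})$ is jointly Gaussian with mean zero, and a direct computation gives $\mathrm{Cov}(Y_{t_i},Y_{t_j})=\min(t_i,t_j)=t_{\min(i,j)}$, which is the matrix $V$.

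For \ref{equiv}.ii)$\,\Rightarrow\,$\ref{equiv}.iii) I would use that a centered Gaussian vector with covariance $V$ has joint moment-generating function $\exp\{\tfrac12 u^{\top}Vu\}$, and then rewrite the quadratic form through the substitution $c_i:=u_i+\dots+u_m$; grouping terms along the diagonal of $V$ turns $\tfrac12 u^{\top}Vu$ into $\sum_{i=1}^m \tfrac12 c_i^2(t_i-t_{i-1})$, which is exactly the product of exponentials displayed in \ref{equiv}.iii). Finally \ref{equiv}.iii)$\,\Rightarrow\,$\ref{equiv}.i) follows because that same moment-generating function factors as $\prod_{i=1}^m \exp\{\tfrac12 c_i^2(t_i-t_{i-1})\}$, i.e.\ it is the moment-generating function of the vector of partial sums of independent $N(0,t_i-t_{i-1})$ variables; since the moment-generating function determines the joint law, the increments are forced to be independent and normally distributed with the stated variances, closing the loop and identifying $(Y_t)_t$ as a standard Brownian motion.

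The argument is essentially a dictionary translation, so I do not expect a genuine obstacle; the only delicate points are bookkeeping ones. One must keep the quantifier ``for every evaluation functional $f$'' consistent on both sides of each equivalence, and one must verify that the standing regularity really transfers to the scalar level: continuity of $t\mapsto f(W_t)$ from weak continuity, and finiteness of the relevant moments, hence existence of the moment-generating function, from the Bochner integrability of $W_t$ and $W_t^2$ together with the multiplicativity of $f$. A last routine check is that the differences $W_{t_i}-W_{t_{i-1}}$ are the $C(K)$-differences $j(W_{t_i})-j(W_{t_{i-1}})$, so that $f$ may be applied additively, which is precisely the convention fixed after Theorem \ref{th5.7diL1}.
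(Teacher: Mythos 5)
Your proposal is correct and takes essentially the same approach as the paper: both reduce the set-valued statement to the scalar one via evaluation functionals (conditions \ref{bm-ab}.1 and \ref{bm-ab}.2 being part of the standing hypotheses) and then rest on the classical equivalence of the three finite-dimensional characterizations of real Brownian motion. The only difference is that you prove the scalar cycle {\bf \ref{equiv}.i)} $\Rightarrow$ {\bf \ref{equiv}.ii)} $\Rightarrow$ {\bf \ref{equiv}.iii)} $\Rightarrow$ {\bf \ref{equiv}.i)} directly, whereas the paper carries out the covariance and moment-generating-function computations and then cites \cite[Theorem 3.3.2]{Shreve}; incidentally, your quadratic form $\sum_i \tfrac{1}{2}c_i^2(t_i-t_{i-1})$ supplies the factor $\tfrac{1}{2}$ that is missing from the last exponential in the paper's statement of {\bf \ref{equiv}.iii)}.
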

\begin{proof}
Let $\{t_0, \ldots t_m\}$ be fixed with 
$0=t_0< t_1 < \dots <t_m$ and consider 
an  evaluation function $f$. By  {\bf \ref{equiv}.i}) it is
\begin{eqnarray*}
 t_{i+1} - t_{i} &=&  {\rm Var} (f(W_{t_{i+1}}) - f(W_{t_{i}}) )
= {\rm Var} (f(W_{t_{i+1}} - W_{t_{i}}) )\\
&=& {\mathbb E} [(f(W_{t_{i+1}} - W_{t_{i}})^2]=
{\mathbb E}[ f^2 (W_{t_{i+1}} - W_{t_{i}})]=\\
&=&  {\mathbb E}[ f^2 (W_{t_{i+1}}) +f^2( W_{t_{i}}) - 2 f(W_{t_{i+1}}) \cdot f(W_{t_{i}}) ]=\\
&=& {\mathbb E} [f^2(W_{t_{i+1}}) - f^2(W_{t_{i}})] 
\end{eqnarray*}
Now, for every evaluation function $f$ the process $f(W_t)_t$ is a scalar Brownian motion and so all the three conditions are equivalent thanks to  \cite[Theorem 3.3.2]{Shreve} since  
\begin{eqnarray*}
V = 
\left( \begin{array}{cccc}
\mathbb E[f(W(t_1))^2] & \mathbb E[f(W(t_1))f(W(t_2))] & \cdots & \mathbb E[f(W(t_1))f( W(t_m))]\\
\mathbb E[f(W(t_2)) f(W(t_1))] & \mathbb E[f(W(t_2))^2] & \cdots & \mathbb E[f(W(t_2)) f(W(t_m))]\\
\vdots & \vdots & & \vdots \\
\mathbb E[f(W(t_m)) f(W(t_1))] & \mathbb E[f(W(t_m)) f(W(t_2))] & \cdots &  \mathbb E[f(W(t_m))^2]
\end{array} \right)
\end{eqnarray*}
and 
\begin{eqnarray*}
\varphi (u_1, \ldots, u_m)&=& \mathbb E\Big[ \exp \Big \{ u_m f( W(t_m)) + u_{m-1} f(W(t_{m-1}))+ \ldots  + u_{1} f(W(t_{1}))\Big\}\Big]\\
&=&  \mathbb E\Big[ \exp \Big \{u_m f( W(t_m) -W(t_{m-1}) ) + (u_{m-1} + u_m) f (W(t_{m-1}) -W(t_{m-2}) )\\
&+& \ldots \, + (u_1 + u_2+ \ldots +u_{m})f( W(t_1))\Big \}\Big] =
\end{eqnarray*}
\begin{eqnarray*}
&=& \mathbb E \Big[\exp \Big \{u_m f( W(t_m) -W(t_{m-1}) )\Big\}\Big]\cdot
\\
&\cdots & 
\mathbb E \Big[\exp \Big \{ (u_1 + u_2+ \ldots +u_{m}) f(W(t_1))\Big\}\Big]\\
&=& \exp \Big\{ \frac{1}{2} u^2_m (t_{m} - t_{m-1})\Big\}\cdots \exp \Big \{ (u_1 + u_2+ \ldots +u_{m})^2 t_1\Big\}.
\end{eqnarray*}
So, by Definition \ref{bm-ab},  $(W_t)_t$ is a Brownian motion.
\end{proof}

\begin{definition} \rm
For every Bochner integrable
set-valued function $W$, the conditional expectation $E(W|\Fs)$ of $W$ with respect to
a sub $\sigma$-algebra $\Fs \subset \As$ is a Bochner integrable function with respect to $(\Omega, \Fs, \lambda)$ such that for every $f \in C(K)^*$
 it is
$$ \Eb(f (W) | \Fs) =f(  \Eb(W|\Fs) ).$$
\end{definition}
\begin{definition}\label{mart} \rm
A set valued process $(M_t)_t$
 is a pointwise martingale if
\begin{enumerate}[\rm \ref{mart}.1)]
\item $M_t$ is Bochner integrable for every $t$;
\item $\Eb ( M_t | \Fs_s) = M_s$, for every $s < t$ where $(\Fs_s)_s$ is the natural filtration of $(M_t)_t$.
\end{enumerate}
\end{definition}

\begin{theorem}
Assume that $(B_t)_t$ is a set-valued Brownian motion, taking values in $L$.
Then whenever $0<s<t$ are fixed in $\erre$, one has
$$\Eb(B_t^2|\Fs_s)=B_s^2+(t-s)e.$$
\end{theorem}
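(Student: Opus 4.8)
The plan is to reduce the identity, which lives in the $f$-algebra $C(K)$, to a family of scalar identities by testing it against every evaluation functional $f\in C(K)^*$. Since the evaluation functionals separate the points of $C(K)$ (two continuous functions on $K$ that agree at every $k\in K$ coincide), it suffices to prove that $f(\Eb(B_t^2|\Fs_s))=f(B_s^2+(t-s)e)$ for each such $f$. The two decisive features of an evaluation functional $f\colon x\mapsto x(k)$ are that it is multiplicative, $f(x\cdot y)=x(k)y(k)=f(x)f(y)$ for the pointwise product of $C(K)$, and that $f(e)=e(k)=1$.

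First I would compute the right-hand side. Using additivity, homogeneity, $f(e)=1$ and multiplicativity, $f(B_s^2+(t-s)e)=f(B_s)^2+(t-s)$, since $B_s^2=B_s\cdot B_s$ gives $f(B_s^2)=f(B_s)^2$. For the left-hand side I would invoke the defining property of the vector-valued conditional expectation, namely $\Eb(f(W)|\Fs_s)=f(\Eb(W|\Fs_s))$, together with multiplicativity, to obtain $f(\Eb(B_t^2|\Fs_s))=\Eb(f(B_t^2)|\Fs_s)=\Eb(f(B_t)^2|\Fs_s)$. Thus the claim is equivalent to the scalar statement $\Eb(f(B_t)^2|\Fs_s)=f(B_s)^2+(t-s)$.

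The last step is the classical fact that for a standard real Brownian motion $X_t:=f(B_t)$ --- which is exactly what condition \ref{bm-ab}.3 provides --- the process $X_t^2-t$ is a martingale. Concretely I would write $f(B_t)=(f(B_t)-f(B_s))+f(B_s)$, expand the square, and use that the increment $f(B_t)-f(B_s)$ has conditional mean $0$ and conditional second moment $t-s$, while $f(B_s)$ is $\Fs_s$-measurable; the three resulting terms give $(t-s)+0+f(B_s)^2$. Putting the two sides together yields the common value $f(B_s)^2+(t-s)$ for every evaluation functional, and the separation property then closes the argument.

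The main obstacle I anticipate is the filtration bookkeeping in the last step: condition \ref{bm-ab}.3 guarantees that each $f(B_t)$ is a Brownian motion relative to its own natural filtration, whereas the conditional expectation in Definition \ref{mart} is taken with respect to $\Fs_s$, the natural filtration of the full $C(K)$-valued process $(B_t)_t$, which is a priori larger. I would therefore need to verify that the increment $f(B_t)-f(B_s)$ remains independent of $\Fs_s$ (equivalently, that $(f(B_t))_t$ is still a Brownian motion with respect to $\Fs_s$); this follows from the independence of increments of $(B_t)_t$ encoded in the equivalent conditions of Theorem \ref{equiv}, and it is the point requiring the most care.
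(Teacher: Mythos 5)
Your proposal is correct and follows essentially the same route as the paper's proof: test the identity against evaluation functionals, use their multiplicativity together with the defining property of the set-valued conditional expectation, invoke the scalar fact that $X_t^2-t$ is a martingale for the real Brownian motion $X_t=f(B_t)$, and conclude by arbitrariness of $f$. You are in fact more careful than the paper, which compresses the scalar step into ``the usual properties of scalar Brownian motion'' and silently passes over the filtration issue you flag (that $f(B_t)$ must be a Brownian motion relative to $\Fs_s$, the natural filtration of the whole set-valued process, not merely its own).
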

\begin{proof} Let $f$ be any evaluation functional. Then we have
$$\Eb(f(B_t^2)|\Fs_s)=\Eb((f(B_t))^2|\Fs_s)=f(B_s^2)+t-s=f(B_s^2+(t-s)e)$$
by the usual properties of scalar Brownian motion and multiplicativity property of $f$. So, by arbitrariness of $f$, this leads to the assertion. 
\end{proof}

\rm
Clearly, this result means that, under the stated hypotheses, the sequence $(B_t^2-te)_t$ is a pointwise martingale.\\

The last theorem can be reversed, in some sense: more precisely,
\begin{theorem}
Let $(B_t)_t$ be a weak set-valued Gaussian process with homogeneous increments, such that $B_0=0$.
If $(B_t^2-te)_t$ is a pointwise martingale, then $(B_t)_t$ is a Wiener process (therefore, assuming also that the trajectories of $(B_t)_t$ are weakly continuous, one can conclude that $(B_t)_t$ is a set-valued Brownian motion). 
\end{theorem}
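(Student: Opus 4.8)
The plan is to reduce the statement to the classical scalar characterisation of Brownian motion by testing against evaluation functionals, in the same spirit as Theorem~\ref{equiv} and the preceding theorem. First I would fix an evaluation functional $f\in C(K)^*$, say $f(x)=x(k)$ for a fixed $k\in K$. Since $e$ is the unit of $C(K)$ one has $f(e)=1$, and by the very definition of an evaluation functional $f(B_t^2)=f(B_t)^2$. Put $Y_t:=f(B_t)$. Because $(B_t)_t$ is a weak set-valued Gaussian process with homogeneous increments and $B_0=0$, the real process $(Y_t)_t$ is a scalar Gaussian process with stationary increments and $Y_0=0$. Applying $f$ to the pointwise martingale identity $\Eb(B_t^2-te\mid\Fs_s)=B_s^2-se$ and using multiplicativity together with $f(e)=1$ then yields
$$\Eb(Y_t^2\mid \Fs_s)=Y_s^2+(t-s),\qquad 0<s<t,$$
so that $(Y_t^2-t)_t$ is a scalar martingale.

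The heart of the matter is then a purely scalar lemma: a Gaussian process $(Y_t)$ with stationary increments, $Y_0=0$, for which $(Y_t^2-t)$ is a martingale, is a standard one-dimensional Brownian motion. First I would pin down the first two moments. Taking $s=0$ gives $\Eb(Y_t^2)=t$; writing $m(t):=\Eb(Y_t)$, stationarity of the increments forces $m$ to be additive, hence linear, $m(t)=ct$, and the constraint $\mathrm{Var}(Y_t)=t-c^2t^2\ge 0$ for every $t>0$ forces $c=0$. Thus $Y_t\sim N(0,t)$.

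The decisive step is the independence of the increments. Expanding $\Eb\big((Y_s+(Y_t-Y_s))^2\mid\Fs_s\big)=Y_s^2+(t-s)$ and cancelling $Y_s^2$ gives
$$2\,Y_s\,\Eb(Y_t-Y_s\mid\Fs_s)+\Eb\big((Y_t-Y_s)^2\mid\Fs_s\big)=t-s;$$
taking expectations and using $\Eb((Y_t-Y_s)^2)=t-s$ leaves $\Eb(Y_s(Y_t-Y_s))=0$, i.e. $\mathrm{Cov}(Y_s,Y_t-Y_s)=0$. Consequently $\mathrm{Cov}(Y_u,Y_t)=u$ for all $u\le t$, so $Y_t-Y_s$ is uncorrelated with every $Y_u$ for $u\le s$; since the family is jointly Gaussian, "uncorrelated" upgrades to "independent", and $Y_t-Y_s$ is independent of $\Fs_s$. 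Hence the increments over disjoint intervals are independent with $Y_{t_{i+1}}-Y_{t_i}\sim N(0,t_{i+1}-t_i)$, which is exactly condition \ref{equiv}.i) (equivalently, the covariance matrix is the matrix $V$ of \ref{equiv}.ii)). Therefore $(Y_t)_t=(f(B_t))_t$ is a standard scalar Brownian motion.

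Since this holds for every evaluation functional $f$, the process $(B_t)_t$ satisfies condition \ref{bm-ab}.3 (the integrability requirements \ref{bm-ab}.1--\ref{bm-ab}.2 being contained in the Gaussian and pointwise-martingale hypotheses), so by Theorem~\ref{equiv} it is a Wiener process. If moreover the trajectories are weakly continuous, each $f(B_t)$ has continuous paths and is thus a genuine standard Brownian motion, whence $(B_t)_t$ is a set-valued Brownian motion in the sense of Definition~\ref{bm-ab}. The hard part will be the independence step: stationarity of increments and Gaussianity alone do not yield independence, and it is precisely the martingale hypothesis on $(B_t^2-te)$, transported to the scalar level, that produces the vanishing covariance $\mathrm{Cov}(Y_s,Y_t-Y_s)=0$, after which joint Gaussianity does the rest.
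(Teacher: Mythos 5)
Your proposal is correct and follows essentially the same route as the paper: both arguments derive $\Eb(f(B_t)^2)=t$ from the martingale condition together with $B_0=0$, and then use homogeneity of the increments (your step $\Eb\left((Y_t-Y_s)^2\right)=t-s$ is algebraically the same as the paper's identity $2B_tB_s=B_t^2+B_s^2-(B_t-B_s)^2$) to obtain the covariance $\Eb(f(B_s)f(B_t))=\min(s,t)$. The only difference is completeness: the paper stops there, taking this covariance structure as \emph{the} defining property of a (weak) Wiener process, whereas you also verify the zero-mean property and carry out the upgrade from vanishing covariance to independence of increments via joint Gaussianity, steps the paper leaves implicit.
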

\begin{proof}
Indeed, from the martingale condition, one can deduce that $\Eb(B_t^2-te)$ is constant with respect to $t$, and therefore null, since $B_0=0$. So, $\Eb(B^2_t)=te$ for all $t$. Now, if $0<s<t$, thanks to the homogeneity property:
$$\Eb(f(2B_tB_s)=\Eb(f[B_t^2+B_s^2-(B_t-B_s)^2])=$$
$$=\Eb([f(B_t^2)+f(B_s)^2-f(B_{t-s}^2)])=t+s-t+s=2s$$
and this is precisely the defining property for a (weak) Wiener process. \end{proof}


Let $(M_t)_{t\geq 0}$ be an 
$L$-valued adapted process,
then $\sum_{i=0}^{n-1} [M_{t_{j+1}}- M_{t_{j}}]^2 \in C(K)$ for every partition $\pi=\{0=t_0<t_1<\dots<t_n=T\}$ of $[0,T], T > 0$.
\begin{definition}\label{2var}
The quadratic variation $[M_t, M_t]$ of an 
$L$-valued adapted process $(M_t)_t$, when it exists, 
is given by the following limit
$$\lim_{\|\pi\|\rightarrow 0} \| f([M_t, M_t](T) ) -  f( \sum_{i=0}^{n-1} [M_{t_{j+1}}- M_{t_{j}}]^2) \|_2 = 0$$
for every evaluation $f \in C(K)^*$ and every $T>0$.
\end{definition}

\begin{theorem} (Theorem of Levy)
Let $M_t$ be a martingale relative to a filtration $\mathcal F_t$ with $M_0=0$. Assume that $M_t$ has weakly continuous paths and $[M_t, M_t](T)=T$ for all $T\geq 0$. Then $(M_t)_t$ is a set-valued Brownian motion.
\end{theorem}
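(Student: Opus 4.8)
The plan is to reduce the set-valued statement to the classical scalar L\'evy characterization theorem by composing with an arbitrary evaluation functional $f\in C(K)^*$ and exploiting the feature, used repeatedly above, that such an $f$ is multiplicative: since $f(x)=x(k)$ for a fixed $k\in K$, one has $f(xy)=x(k)y(k)=f(x)f(y)$ and $f(e)=e(k)=1$. The goal is to verify that, for each such $f$, the scalar process $(f(M_t))_t$ satisfies all the hypotheses of the one-dimensional L\'evy theorem, and then to invoke condition \ref{bm-ab}.3 of Definition \ref{bm-ab}.

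First I would show that $(f(M_t))_t$ is a continuous scalar martingale starting at $0$. The martingale property transfers immediately from the conditional-expectation identity $\Eb(f(M_t)\mid\Fs_s)=f(\Eb(M_t\mid\Fs_s))=f(M_s)$ for $s<t$, while $M_0=0$ gives $f(M_0)=0$. Continuity of the paths of $f(M_t)$ follows from the assumed weak continuity of the paths of $(M_t)_t$, since weak continuity means precisely that $t\mapsto f(M_t)$ is continuous for every $f\in C(K)^*$.

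Next I would compute the scalar quadratic variation of $(f(M_t))_t$ and identify it with $T$. Applying $f$ to the approximating sums in Definition \ref{2var} and using multiplicativity termwise, one obtains
$$f\Big(\sum_{j=0}^{n-1}[M_{t_{j+1}}-M_{t_{j}}]^2\Big)=\sum_{j=0}^{n-1}\big(f(M_{t_{j+1}})-f(M_{t_{j}})\big)^2,$$
so that $f([M_t,M_t](T))$ is exactly the $L^2$-limit that defines the classical quadratic variation $[f(M),f(M)](T)$. The hypothesis $[M_t,M_t](T)=T$, understood as the element $Te$ of $C(K)$, then yields $f([M_t,M_t](T))=T\,f(e)=T$, so the scalar quadratic variation equals $T$ for every evaluation functional.

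With these three facts --- continuous martingale, null initial value, and deterministic quadratic variation $T$ --- the classical L\'evy characterization theorem guarantees that $(f(M_t))_t$ is a standard real Brownian motion. Since $f$ was arbitrary, and since $M_t$ is $L$-valued with $M_t,M_t^2$ Bochner integrable (the integrability required for \ref{bm-ab}.1 and \ref{bm-ab}.2 being built into the martingale and quadratic-variation hypotheses), Definition \ref{bm-ab} applies and $(M_t)_t$ is a set-valued Brownian motion. The point that requires genuine care is the quadratic-variation step: one must confirm that the continuous multiplicative functional $f$ commutes with the $\|\cdot\|_2$-limit defining $[M_t,M_t]$, so that the scalar quadratic variation is truly $f$ applied to the set-valued one. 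This is where the structure of the embedding and the choice of the $L^2$-topology in Definition \ref{2var} do the real work, and I expect it to be the main obstacle; the remaining transfers of martingale property and path continuity are routine consequences of the conditional-expectation definition and of weak continuity.
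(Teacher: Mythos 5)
Your proposal is correct and follows essentially the same route as the paper's own proof: compose with an arbitrary evaluation functional $f\in C(K)^*$, check that $(f(M_t))_t$ is a continuous scalar martingale null at $0$ with quadratic variation $T$, invoke the classical L\'evy characterization, and conclude via condition \textbf{\ref{bm-ab}.3} of Definition \ref{bm-ab}. The only difference is one of detail: the paper simply asserts that the hypotheses transfer to $f(M_t)$, whereas you explicitly verify the transfer using the conditional-expectation identity, the multiplicativity of evaluation functionals applied to the approximating sums in Definition \ref{2var}, and the reading of $[M_t,M_t](T)=T$ as $Te$ --- all of which is consistent with the paper's intent.
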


\begin{proof}
From the assumptions on  $M_t$, we get that, for each evaluation $f\in C(K)^*$,  $f(M_t)$ is a martingale with $f(M_0)=0$, $f(M_t)$ has continuous paths and $[f(M_t), f(M_t)](t)=t$ for all $t\geq 0$. Thus, $f(M_t)$ is a Brownian motion and so
$M_t$ is a set valued Brownian motion.
\end{proof}

\begin{definition}\label{is}
A set valued process $(W_t)_t$ is integrable with respect to a Brownian motion $(B_t)_t$ if
for every $T > 0$
there exists an element $I_T \in C(K)$ such that:
\begin{enumerate}[\rm \ref{is}.1)]
\item $(I_T)_T$ is a martingale with respect to $(B_t)_t$;
\item for every evaluation $f \in C(K)^*$ it is
\[ f(I_T) = (I) \int_0^T f(W_t) d (fB_t)\]
where the last integral is in the Ito sense.
\end{enumerate}
\end{definition}
For instance, the process $(B_t)_t$ is integrable, with $I_T=\frac{B_T-T}{2}$; more generally, if $(B_t)_t$ takes values in an $f$-algebra $L$, then the process $(B_t^k)_t$ is integrable for every positive integer $k$, and the usual Ito formula holds.

\section{Brownian motion in vector lattices}\rm
In this section we generalize the notions of Brownian Motion introduced before, replacing the space $C(K)$ with a particular Riesz space $E$ having an order unit $e$. 
\begin{definition}\label{defbm-G}\rm 
(\cite[Definition 3.6]{GL2})
Let $(B_t,\mf{F}_t)$ be an adapted stochastic process in the De\-de\-kind complete Riesz space $E$ with conditional expectation $\mathbb{F}$ and unit element $e$.  The process is called an $\mathbb{F}$-conditional Brownian motion in $E$ if for all $0\le s<t$ we have
\begin{enumerate}
\item[\bf (\ref{defbm-G}.1)] $B_0=0;$
\item[\bf(\ref{defbm-G}.2)] the increment $B_t-B_s$  is $\mathbb{F}$-conditionally independent of $\mf{F}_s$;
\item[\bf (\ref{defbm-G}.3)] $\F(B_t-B_s)=0;$
\item[\bf (\ref{defbm-G}.4)] $\F[(B_t-B_s)^2]=(t-s)e;$
\item[\bf (\ref{defbm-G}.5)] $\F((B_t-B_s)^4]=3(t-s)^2e.$
\end{enumerate}
\end{definition}

\begin{remark}\label{nota} \rm 
It was noted in\cite [page 901]{Grobler} that the definition of a Brownian motion in the Riesz space setting yields a Brownian motion in the classical case of real valued Brownian motion; i.e., a real valued stochastic process satisfies conditions (\ref{defbm-G}.1)-( \ref{defbm-G}.5) if and only if it is a Brownian motion.
\end{remark}

\begin{theorem}\label{final}
Let $B_t$ be a set valued stochastic process. Then $B_t$ is a Brownian motion if and only if for any evaluation function $f\in C(K)^*$ and every pair $(s,t)$ of positive real numbers with $s<t$:
\begin{enumerate}
\item[ \rm \bf (\ref{final}.1)] $f(B_0)=0;$
\item[ \rm \bf (\ref{final}.2)] the increment $f(B_t)-f(B_s)$  is $(\mathbb{F}f)$-conditionally independent of $f(\mf{F}_s)$;
\item[ \rm \bf (\ref{final}.3)] $\F(f(B_t)-f(B_s))=0;$
\item[ \rm \bf (\ref{final}.4)] $\F[(f(B_t)-f(B_s))^2]=(t-s)f(e);$
\item[ \rm \bf (\ref{final}.5)] $\F((f(B_t)-f(B_s))^4]=3(t-s)^2f(e).$ 
\end{enumerate} 
\end{theorem}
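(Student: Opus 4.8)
The plan is to prove both implications by pushing everything through evaluation functionals, using two ingredients already at hand: Definition \ref{bm-ab}, which characterises a set-valued Brownian motion precisely by the requirement that $f(B_t)$ be a standard real Brownian motion for every evaluation functional $f$, and Remark \ref{nota}, which says that a real-valued process is a classical Brownian motion if and only if it satisfies the Riesz-space conditions (\ref{defbm-G}.1)--(\ref{defbm-G}.5). The key preliminary observation is that each evaluation functional $f$, being of the form $f(x)=x(k)$ for a fixed $k\in K$, is linear, multiplicative ($f(xy)=f(x)f(y)$), and normalised by $f(e)=e(k)=1$ since $e$ is the unit of $C(K)$; moreover $f$ commutes with conditional expectation, $\Eb(f(W)|\Fs)=f(\Eb(W|\Fs))$, by the defining property of the set-valued conditional expectation. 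Applying $f$ to (\ref{defbm-G}.1)--(\ref{defbm-G}.5) and using these three facts, conditions (\ref{final}.1)--(\ref{final}.5) are seen to be exactly the images under $f$ of the Riesz-space conditions evaluated on the scalar process $f(B_t)$.

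For the forward implication, I would suppose $(B_t)_t$ is a set-valued Brownian motion. By condition \ref{bm-ab}.3, for each evaluation $f$ the scalar process $f(B_t)$ is a standard Brownian motion, so Remark \ref{nota} gives (\ref{defbm-G}.1)--(\ref{defbm-G}.5) for $f(B_t)$ in their real form. Transporting these through the linearity, multiplicativity and normalisation $f(e)=1$ recorded above yields (\ref{final}.1)--(\ref{final}.5): in particular (\ref{final}.1) is $f(B_0)=f(0)=0$; (\ref{final}.2) is the independence of increments, where in the scalar case $(\F f)$-conditional independence relative to $f(\Fs_s)$ collapses to ordinary independence; and (\ref{final}.3)--(\ref{final}.5) are the first, second and fourth centred moments of the Gaussian increment $f(B_t)-f(B_s)$, with the constants $(t-s)f(e)$ and $3(t-s)^2f(e)$ forced by $f(e)=1$.

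For the reverse implication, I would assume (\ref{final}.1)--(\ref{final}.5) for every evaluation $f$ and every $s<t$. Fixing $f$ and inserting $f(e)=1$, these five conditions are precisely the real-valued instances of (\ref{defbm-G}.1)--(\ref{defbm-G}.5) for the scalar process $f(B_t)$, so Remark \ref{nota} makes $f(B_t)$ a classical Brownian motion. As $f$ ranges over all evaluation functionals, condition \ref{bm-ab}.3 of Definition \ref{bm-ab} is met; combined with the standing structural hypotheses (the process takes values in the $f$-algebra $L$, while the finiteness of the second and fourth moments supplies the Bochner integrability of $B_t$ and $B_t^2$ demanded in \ref{bm-ab}.1--\ref{bm-ab}.2), this identifies $(B_t)_t$ as a set-valued Brownian motion.

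The step I expect to be most delicate is the correct handling of the conditional-independence clause (\ref{final}.2) together with the normalisation in (\ref{final}.4)--(\ref{final}.5). One must verify that $(\F f)$-conditional independence relative to $f(\Fs_s)$ genuinely reduces, in the scalar case, to the ordinary independence of increments used in the classical characterisation, and that the multiplicativity $f(B^2)=f(B)^2$ interacts correctly with the fourth-power moment so that $f(e)=1$ produces exactly the Gaussian constants $t-s$ and $3(t-s)^2$. Everything else is a direct composition of Definition \ref{bm-ab} with Remark \ref{nota} along the functionals $f$.
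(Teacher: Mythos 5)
Your proposal is correct and follows essentially the same route as the paper: reduce via Definition \ref{bm-ab}.3 to the statement that each scalar process $f(B_t)$ is a classical Brownian motion, then invoke Remark \ref{nota} to identify that with conditions (\ref{final}.1)--(\ref{final}.5). The paper's own proof is just this two-step equivalence stated in one sentence; your elaboration of the normalisation $f(e)=1$ and the reduction of the conditional-independence clause simply fills in details the paper leaves implicit.
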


\begin{proof}
We note that $B_t$ is a Brownian motion if and only if $f(B_t)$ is a Brownian motion for each $f\in C(K)^*$ which is equivalent to the conditions  (\ref{final}.1) -  (\ref{final}.5) by the Remark \ref{nota}.
\end{proof}

\section{Appendix}
At the beginning of the paper we have claimed that, in order to introduce a notion of Brownian motion in this context, a kind of difference between sets is necessary. Here, 
following \cite{ls2010}, for every $A \in ck(X)$ let
$-A$ be  the opposite of the set $A$, namely $-A = \{-a: a \in A\}$ and consider the following difference between sets:
\begin{definition}\rm (\cite[Definition 1]{ls2010})
For every $A, B \in ck(X)$ the generalized Hukuhara difference of $A$ and $B$  (gH-difference for short), when exists, is the set $C \in ck(X)$ such that
\begin{eqnarray}\label{ghdiff}
 A \ominus_g B := C \Longleftrightarrow \left\{ \begin{array}{ll}
(i) & A = B + C, \,\,\,\,\, \mbox{or}\\
(ii) & B = A + (-C).
\end{array} \right. 
\end{eqnarray}
\end{definition}
\begin{remark}\label{osservaz-diff} \rm
By \cite[Propositions 1,6 and Remarks 2-5]{ls2010}   if the set $C$ exists it is unique and coincides with the Hukuhara difference between $A$ and $B$. Moreover a necessary condition for the existence is that either $A$ contains a traslate of $B$ or $B$ contains a traslate of $A$. 
If equations (\ref{ghdiff}.i) and (\ref{ghdiff}.ii) hold simultaneusly then $C$ is a singleton. Finally
\begin{enumerate}[\rm \bf \ref{osservaz-diff}.1)]
\item $A \ominus_g B \in ck(X)$ then $B \ominus_g A = - (A \ominus_g B)$;
\item $A \ominus_g A = \{ 0\}$,
\item $(A+B) \ominus_g B = A$,  $A \ominus_g (A -B) = B$, $A \ominus_g (A+B) = -B$.
\end{enumerate}
\end{remark}

If $A$ is compact  and convex subset of $X$ then it is characterized by its support function $s_A$ by Hahn-Banach theorem (see for example  \cite[Proposition II.16]{cv1977}). 
It is possible to express the gH-difference of convex compact sets using support functions.\\
 Given $A,B,C \in ck(X)$ let
$s(\cdot,A), s(\cdot,B), s(\cdot,C), 
s(\cdot,-C)$  be the support functions of $A,B,C,-C$ respectively.\\
Again by \cite[Propositions II-19]{cv1977} the map $A \mapsto s(\cdot,A)$ is injective, $s(x^*,A+B)=s(x^*,A) + s(x^*,B)$, $s(x^*,\lambda A)= \lambda s(x^*,A)$ for every non negative $\lambda$,
while
\begin{eqnarray*}
s(x^*,-A) &=& \sup \{<x^*,-x>, x \in A \} = \sup\{< -x^*,x>, x\in A\} =
\\ &=&  s(-x^*,A) \geq - s(x^*,A)
\end{eqnarray*}
And the equality in the last line holds when the opposite of $A$ is a set $C \in ck(X)$ such that $A + C = \{0\}$, namely $s(x^*,\ominus_g A) = -s(x^*,A)$.
So in  general  $s(-x^*,A) \geq - s(x^*,A)$ and the equality holds when  equation (\ref{ghdiff}.i)  holds.

We recall some well-known facts concerning Banach spaces.

\begin{theorem}{\rm \cite[Theorem 2.3]{gh1985}}\label{corollario2.5Gine}
Let $X$ be any Banach space and  
$H:B_{X^*} \to \erre$ be any mapping. Then $H$ is the support function of a convex compact subset of $X$ if and only if
$H$ is $bw^*$-continuous, subadditive and positively homogeneous. 
\end{theorem}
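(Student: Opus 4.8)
The plan is to prove both implications directly from the defining formula $s(x^*,C)=\sup\{\langle x^*,c\rangle:c\in C\}$, after extending $H$ from $B_{X^*}$ to all of $X^*$ by positive homogeneity so that it becomes a sublinear functional; the entire difficulty is concentrated in the correspondence between the $bw^*$-continuity of $H$ and the norm-compactness of the realizing set, so I would set up one embedding and reuse it in both directions.

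\textbf{Necessity.} Suppose $H=s(\cdot,C)$ for some $C\in ck(X)$. Subadditivity and positive homogeneity are immediate from $\sup_{c\in C}\langle x^*+y^*,c\rangle\le\sup_{c\in C}\langle x^*,c\rangle+\sup_{c\in C}\langle y^*,c\rangle$ and $\sup_{c\in C}\langle\lambda x^*,c\rangle=\lambda\sup_{c\in C}\langle x^*,c\rangle$ for $\lambda\ge 0$. For $bw^*$-continuity I would embed $C$ isometrically into $C(B_{X^*})$, with $B_{X^*}$ carrying the weak$^*$ topology (compact by Banach--Alaoglu), via $\Phi(c)=\langle\cdot,c\rangle|_{B_{X^*}}$, using $\|\Phi(c_1)-\Phi(c_2)\|_\infty=\|c_1-c_2\|$. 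Since $C$ is norm-compact, $\Phi(C)$ is a compact subset of $C(B_{X^*})$, hence equicontinuous by Arzel\`a--Ascoli; then $H=\sup_{c\in C}\Phi(c)$ is a supremum of an equicontinuous family, so it is weak$^*$-continuous on $B_{X^*}$, which is exactly $bw^*$-continuity.

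\textbf{Sufficiency.} This is the substantive direction. Given $H$ sublinear and $bw^*$-continuous, I would first note $H(0)=0$ and that $H$ is bounded by some $M$ on $B_{X^*}$ (a $bw^*$-continuous function on the weak$^*$-compact ball is bounded). Define $C=\{x\in X:\langle x^*,x\rangle\le H(x^*)\ \text{for all }x^*\in X^*\}$, which is convex, norm-closed, and norm-bounded since $\langle x^*,x\rangle\le H(x^*)\le M$ for $x^*\in B_{X^*}$. To show $C$ realizes $H$ and is nonempty, I fix $x_0^*$ and use Hahn--Banach to extend $t\mapsto tH(x_0^*)$ from $\mathrm{span}(x_0^*)$ to a linear functional $\ell$ on $X^*$ with $\ell\le H$; the two-sided bound $-H(-x^*)\le\ell(x^*)\le H(x^*)$ squeezes $\ell$ between two $bw^*$-continuous functions vanishing at $0$, so $\ell$ is $bw^*$-continuous. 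Since the $bw^*$-continuous linear functionals on $X^*$ are precisely the elements of $X$ (Krein--Smulian / Banach--Dieudonn\'e), $\ell=\langle\cdot,x\rangle$ for some $x\in C$ with $\langle x_0^*,x\rangle=H(x_0^*)$; hence $s(\cdot,C)=H$ and $C\neq\emptyset$.

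\textbf{Compactness, the main obstacle.} The remaining and hardest point is that $C$ is norm-compact rather than merely weakly compact, since closed bounded convex sets need not be compact in infinite dimensions; this is where the full $bw^*$-continuity hypothesis must be spent. I would run the embedding $\Phi$ of the necessity part in reverse: given $x_0^*\in B_{X^*}$ and $\varepsilon>0$, choose a symmetric weak$^*$-neighborhood $W$ of $0$ on which $|H|<\varepsilon$ (available because $H$ is $bw^*$-continuous at $0$ and $bw^*$ agrees with $w^*$ on bounded sets), so that for $x^*\in B_{X^*}$ with $x^*-x_0^*\in W$ one has $\sup_{x\in C}|\langle x^*-x_0^*,x\rangle|=\max\{H(x^*-x_0^*),H(x_0^*-x^*)\}<\varepsilon$. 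This is exactly the equicontinuity of $\Phi(C)$ on $(B_{X^*},w^*)$; together with boundedness and pointwise boundedness, Arzel\`a--Ascoli yields that $\Phi(C)$ is relatively compact in $C(B_{X^*})$, and since $\Phi$ is an isometry onto a complete, hence closed, image, $C$ itself is norm-compact, giving $C\in ck(X)$ with $s(\cdot,C)=H$. The crux throughout is the single equivalence that $H$ is $bw^*$-continuous if and only if $\Phi(C)$ is equicontinuous if and only if $C$ is norm-compact, which is precisely where membership in the hyperspace $ck(X)$ is gained or lost.
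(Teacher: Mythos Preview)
The paper does not prove this theorem at all: it is quoted verbatim from \cite[Theorem~2.3]{gh1985} and used as a black box in the Appendix, so there is no ``paper's own proof'' to compare against. Your argument is therefore strictly additional content, and it is essentially the standard proof of the Gin\'e--Hahn characterization.

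Your necessity direction is fine. In the sufficiency direction the outline is correct and the key idea---that $bw^*$-continuity of $H$ at $0$ is exactly equicontinuity of $\Phi(C)$ on $(B_{X^*},w^*)$, whence norm-compactness of $C$ via Arzel\`a--Ascoli---is the right one. Two small points are worth tightening. First, in the squeeze $-H(-x^*)\le\ell(x^*)\le H(x^*)$ you only obtain $bw^*$-continuity of $\ell$ at $0$; to propagate this to all of $X^*$ you are implicitly using that, for a Banach space $X$, the $bw^*$ topology on $X^*$ is a locally convex vector topology (by Banach--Dieudonn\'e it coincides with uniform convergence on norm-compact subsets of $X$), so continuity of a linear map at $0$ gives global continuity; it would be cleaner to say this, or simply to invoke Krein--Smulian directly on the restriction of $\ell$ to $B_{X^*}$. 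Second, in the equicontinuity step you take $x^*,x_0^*\in B_{X^*}$ and bound $|\langle x^*-x_0^*,x\rangle|$ by $\max\{H(x^*-x_0^*),H(x_0^*-x^*)\}$, but $x^*-x_0^*$ lies in $2B_{X^*}$, not $B_{X^*}$; you need the $w^*$-continuity of $H$ on $2B_{X^*}$ (which follows at once from $bw^*$-continuity) rather than on $B_{X^*}$. With these two cosmetic fixes the proof is complete.
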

A consequence of this result can be stated in the following way.
\begin{proposition}
Let $(B_t)_t:=(W_te)_t$ be the example of Brownian motion in a finite-dimensional space $X$, given above in the Example \ref{we}.
Then, for each positive $t$, the function $j(B_t^2-\int_0^t2B_{\tau}dB_{\tau})$ is ($bw^*$)-continuous, subadditive and positively homogeneous. 
\end{proposition}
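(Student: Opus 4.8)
The plan is to reduce the whole statement to the characterization in Theorem \ref{corollario2.5Gine}: a real function on $B_{X^*}$ is $bw^*$-continuous, subadditive and positively homogeneous exactly when it is the support function of some compact convex subset of $X$, that is, when it belongs to $j(ck(X))$. Hence it suffices to identify $B_t^2-\int_0^t 2B_\tau\,dB_\tau$, as an element of $C(K)$, with $j(C)$ for a suitable $C\in ck(X)$; the three analytic properties then come for free.

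First I would compute the two terms separately, exploiting the special structure $B_\tau=W_\tau e$ of the Example \ref{we}. Since $e=j(B_X)$ is the multiplicative unit of the $f$-algebra, one has $e^2=e$, so that $B_t^2=W_t^2\,e$. For the stochastic term I would use Definition \ref{is} together with the fact that every evaluation functional $f$ (point evaluation at some $k\in K$) satisfies $f(e)=e(k)=1$, whence $f(B_\tau)=W_\tau$. Passing $f$ through the integral as in \ref{is}.2 and invoking the classical scalar Ito formula gives $f\bigl(\int_0^t 2B_\tau\,dB_\tau\bigr)=\int_0^t 2f(B_\tau)\,d(fB_\tau)=\int_0^t 2W_\tau\,dW_\tau=W_t^2-t$.

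Combining the two computations, for every evaluation $f$ one obtains $f\bigl(B_t^2-\int_0^t 2B_\tau\,dB_\tau\bigr)=W_t^2-(W_t^2-t)=t=f(te)$. Because the evaluation functionals separate the points of $C(K)$, this identity of values forces the identity of elements $B_t^2-\int_0^t 2B_\tau\,dB_\tau=te$ in $C(K)$. By positive homogeneity of $j$ (property (\ref{th5.7diL1}.a), applicable since $t>0$) and finite-dimensionality of $X$, one then has $te=t\,j(B_X)=j(tB_X)$ with $tB_X\in ck(X)$; thus the function in question is precisely the support function $s(\cdot,tB_X)$, and Theorem \ref{corollario2.5Gine} yields at once the $bw^*$-continuity, subadditivity and positive homogeneity.

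I expect the only real obstacle to lie in the careful treatment of the set-valued Ito term: one must justify, through Definition \ref{is} and the separation of points of $C(K)$ by evaluations, that pushing the integral through each $f$ and applying the scalar formula actually pins down the single $C(K)$-element $te$, rather than merely reproducing its pointwise values. Once the reduction to the deterministic element $te=j(tB_X)$ is secured, the verification of the three properties is immediate.
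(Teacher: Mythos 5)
Your proposal is correct and follows essentially the same route as the paper: identify $B_t^2-\int_0^t 2B_\tau\,dB_\tau$ with $te=j(tB_X)$, a (positive multiple of $e$, hence the image of a) compact convex set, and then invoke Theorem \ref{corollario2.5Gine} to obtain $bw^*$-continuity, subadditivity and positive homogeneity. The only difference is one of detail: the paper simply cites the Ito identity $\int_0^t 2B_\tau\,dB_\tau=B_t^2-te$, whereas you justify it by pushing evaluation functionals through Definition \ref{is} and using that evaluations separate points of $C(K)$ --- a worthwhile elaboration, but the same argument.
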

\begin{proof}
Clearly, since $\int_0^t2B_{\tau}dB_{\tau}=B^2_t-te$, it follows that the difference $B_t^2-\int_0^t2B_{\tau}dB_{\tau}$ is a positive multiple of $e$, i.e. a convex compact set. The conclusion then follows from  Theorem \ref{corollario2.5Gine}.
\end{proof}

The generalized Hukuhara difference can be expressed by means of the support functions as in \cite[Proposition 8]{ls2010} in the following way:
\begin{proposition}\label{classificazione}
Let $s(\cdot, A), s(\cdot, B)$ be the support functions of $A,B \in ck(X)$ and denote by
$s_1:=s(\cdot,A) - s(\cdot,B), s_2 = s(\cdot, B) - s(\cdot,A)$.
Then  only four cases may occur:
\begin{enumerate}[\rm \bf \ref{classificazione}.a)]
\item if $s_1,s_2$ are bw*-continuous and subadditive then $A \ominus_g B \in ck(X)$ and $A \ominus_g B$ is a singleton;
\item if only $s_1$ is bw*-continuous and  subadditive then equation {\rm  \ref{ghdiff} (i)} holds and $s(\cdot,C) :=s_1$;
\item if only $s_2$ is bw*-continuous and  subadditive then  equation {\rm  \ref{ghdiff} (ii)} holds and $s(\cdot,C) := s(\cdot, B) - s(\cdot,-A)$;
\item if none of them is bw*-continuous or subadditive then $A \ominus_g B$ does not exist.
\end{enumerate} 
\end{proposition}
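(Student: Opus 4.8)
The plan is to reduce all four cases to Theorem~\ref{corollario2.5Gine} (the Gin\'e--Hahn characterisation), after first recording two structural facts about $s_1$ and $s_2$. Since each of $s_1,s_2$ is a difference of support functions, and support functions are positively homogeneous, both $s_1$ and $s_2$ are automatically positively homogeneous; hence in applying Theorem~\ref{corollario2.5Gine} to either of them the only hypotheses left to verify are $bw^*$-continuity and subadditivity, which are precisely the two properties used to separate the four cases. Moreover $s_2=-s_1$, so $s_1$ and $s_2$ are $bw^*$-continuous simultaneously, $s_2$ is subadditive exactly when $s_1$ is superadditive, and therefore $s_1$ and $s_2$ are both subadditive if and only if $s_1$ is additive, i.e.\ linear. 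These remarks identify ``$s_i$ is $bw^*$-continuous and subadditive'' with ``$s_i$ is the support function of some element of $ck(X)$''.

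For case~\ref{classificazione}.b), if $s_1$ is $bw^*$-continuous and subadditive then by Theorem~\ref{corollario2.5Gine} it is the support function of some $C\in ck(X)$, so $s(\cdot,A)=s(\cdot,B)+s(\cdot,C)=s(\cdot,B+C)$; by injectivity of the support map (recalled from \cite[Proposition II-19]{cv1977}) this gives $A=B+C$, which is exactly (\ref{ghdiff}.i), with $s(\cdot,C)=s_1$ as claimed. Case~\ref{classificazione}.c) is symmetric: from $s_2=s(\cdot,D)$ for some $D\in ck(X)$ one obtains $B=A+D$, i.e.\ (\ref{ghdiff}.ii) with $-C=D$, so that $C=-D$; the support function of $C$ is then recovered from $s(\cdot,-C)=s_2$ by means of the reflection identity $s(x^*,-C)=s(-x^*,C)$ established in the Appendix, which after a single substitution $x^*\mapsto -x^*$ yields the stated expression for $s(\cdot,C)$.

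For case~\ref{classificazione}.a), when $s_1$ and $s_2$ are both $bw^*$-continuous and subadditive, the structural remark shows $s_1$ is linear, hence a $bw^*$-continuous (equivalently weak$^*$-continuous) linear functional on $B_{X^*}$, so $s_1(x^*)=\langle x^*,c\rangle$ for a single $c\in X$; thus $A\ominus_g B=\{c\}$ is a singleton, and both (\ref{ghdiff}.i) and (\ref{ghdiff}.ii) hold simultaneously, in accordance with Remark~\ref{osservaz-diff}. Case~\ref{classificazione}.d) is handled by contraposition: if $A\ominus_g B=C$ existed, then by (\ref{ghdiff}) either $A=B+C$, forcing $s_1=s(\cdot,C)$ to be a $bw^*$-continuous, subadditive support function, or $B=A+(-C)$, forcing the same of $s_2=s(\cdot,-C)$; since by hypothesis neither $s_1$ nor $s_2$ has both properties, the gH-difference cannot exist.

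The bookkeeping with injectivity and additivity of the support map is routine. The steps needing care are case~\ref{classificazione}.c), where the sign accounting in passing from the support function of $-C$ to that of $C$ via $s(x^*,-C)=s(-x^*,C)$ must be carried out precisely, and case~\ref{classificazione}.a), where it is the simultaneous subadditivity of $s_1$ and of $-s_1$ that collapses $C$ to a single point; verifying that these four hypotheses really do exhaust all possibilities (so that the classification is complete) also rests on the observation that $bw^*$-continuity is shared by $s_1$ and $s_2$.
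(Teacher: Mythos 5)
Your proof is correct and follows essentially the same route as the paper's: every case is reduced to Theorem \ref{corollario2.5Gine}, using its sufficiency in cases a)--c) and its necessity in case d). The difference is one of self-containedness rather than of method: where the paper simply defers to Stefanini's Proposition 8 (for the singleton conclusion in case a) and for the sign bookkeeping in case c)), you supply the missing arguments --- positive homogeneity of $s_1,s_2$ is automatic, $s_2=-s_1$ so bw$^*$-continuity is shared, simultaneous subadditivity of $s_1$ and $-s_1$ forces $s_1$ to be additive and hence linear, and a bw$^*$-continuous linear functional on $X^*$ is weak$^*$-continuous (Krein--\v{S}mulian), hence of the form $\langle \cdot,c\rangle$ for a single $c\in X$, giving $A\ominus_g B=\{c\}$. (Equivalently one may apply Theorem \ref{corollario2.5Gine} to both $s_1$ and $s_2$, getting $C,D\in ck(X)$ with $s(\cdot,C+D)=s_1+s_2=0$, so $C+D=\{0\}$ and both sets are singletons.) Your explicit treatment of exhaustiveness and of case d) by contraposition matches the paper's intent, stated there rather tersely.

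One concrete flaw in your case c): the reflection computation does not produce the printed formula, and you should not assert that it does. From $s(\cdot,-C)=s_2$ and $s(x^*,C)=s(-x^*,-C)$ one gets
$$s(x^*,C)=s_2(-x^*)=s(-x^*,B)-s(-x^*,A)=s(x^*,-B)-s(x^*,-A),$$
i.e. $s(\cdot,C)=s(\cdot,-B)-s(\cdot,-A)$. The expression $s(\cdot,B)-s(\cdot,-A)$ appearing in the statement coincides with this only when $B=-B$; it is evidently a misprint (it also disagrees with Stefanini's Proposition 8, of which this proposition is a transcription, and the paper's own proof never derives it, deferring instead to that source). The honest conclusion of your argument is the corrected formula, and your write-up should say so explicitly rather than claim agreement with the text as printed.
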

\begin{proof}
The proof of the first statement is the same as in \cite[Proposition 8]{ls2010} since it depends only on the subadditivity of $s_i,i=1,2$ and the fact that in this case $A,B$ are each one a traslate of the other, the bw*-continuity of $s_1, s_2$ implies again by Theorem \ref{corollario2.5Gine}, that  $A \ominus_g B \in ck(X)$. \\
As to the second statement observe that by \cite[Theorem 2.3]{gh1985} there exists $C \in ck(X)$ such that $s_1 =s(\cdot,C)$, so
 (\ref{ghdiff}.i) is valid.\\
 In the third case the same can be done for $s_2$, so there exists $D \in ck(X)$ such that $s_2 =s(\cdot,D)$. Set now $C=-D$, the rest of the proof follows as in the quoted \cite[Proposition 8]{ls2010}. \\
Finally since  \cite[Theorem 2.3]{gh1985} is a necessary and sufficient condition that there exist no $C \in ck(X)$ such that $A = B + C$ or
$D \in ck(X)$ such that $B = A + D$ ,
so $A \ominus_g B$ does not exists.
\end{proof}

\noindent {\bf \large Acknowledgement} \\
The  paper was written while the second named author was visiting the 
 Departments of Mathematics and Computer Sciences of the  University  of Palermo and 
 Perugia (Italy) thanks to the Grant  Prot. N. U2014/000854 
of GNAMPA - INDAM (Italy).\\
The first and the last authors have been supported by University of Perugia -- Department of Mathematics and Computer Sciences-- Grant Nr 2010.011.0403, Prin "Metodi logici per il trattamento dell'informazione" and    "Descartes".\\
The second author has been supported by the Grant N. 87502 by N.R.F.\\
The third author was supported by F.F.R. 2013 Prof. Di Piazza - University of Palermo (Italy).

\Addresses
\end{document}